\newcommand{\ignore}[1]{}
\newcommand{\hide}[1]{}
\DeclareMathOperator{\ad}{ad}
\DeclareMathOperator{\Der}{Der}
\newcommand{\F}{\mathbb F}
\newcommand{\Z}[0]{\mathbb Z}
\newtheorem{dummy}{Dummy}
\numberwithin{dummy}{section}
\numberwithin{equation}{section}
\newtheorem{theorem}[dummy]{Theorem}
\theoremstyle{definition}
\newtheorem{definition}[dummy]{Definition}
\theoremstyle{remark}
\newtheorem{rem}[dummy]{Remark}
\newtheorem*{rem*}{Remark to ourselves}
\begin{document}

\bibliographystyle{amsalpha}

\author{Marina Avitabile}
\email{marina.avitabile@unimib.it}
\address{Dipartimento di Matematica e Applicazioni\\
  Universit\`a degli Studi di Milano - Bicocca\\
  via Cozzi 53\\
  I-20125 Milano\\
  Italy}

\author{Sandro Mattarei}
\email{mattarei@science.unitn.it}
\address{Dipartimento di Matematica\\
  Universit\`a degli Studi di Trento\\
  via Sommarive 14\\
  I-38050 Povo (Trento)\\
  Italy}

\title[Nottingham {L}ie algebras]{Nottingham {L}ie algebras with diamonds \\of finite and infinite type}

\date{\today}
\begin{abstract}
We consider a class of infinite-dimensional, modular, graded Lie algebras,
which includes the graded Lie algebra associated to the Nottingham group with respect to its lower central series. We
identify two subclasses of {\em Nottingham Lie algebras} as loop algebras of finite-dimensional simple Lie algebras of Hamiltonian Cartan type.
A property of Laguerre polynomials of derivations, which is related to toral switching, plays a crucial role
in our constructions.
\end{abstract}
\subjclass[2000]{Primary 17B50; secondary  17B70,  17B65, 17B56}
\keywords{Modular Lie algebra, graded Lie algebra, thin Lie algebra}
\maketitle

\section{Introduction}

{\em Nottingham Lie algebras} owe their name to one remarkable special case:
the graded Lie ring with the lower central series of the
Nottingham group, over the field of $p$ elements, with $p>2$~\cite{Jenn,John,Cam}.
Although the Nottingham group over $\F_p$
is a very complex object,
its lower central series descends in a simple and regular way,
the lower central quotients being generally one-dimensional,
except for one quotient every $p-1$ being two-dimensional.
The essential feature which leads to the generalization considered here is that the Nottingham group is a pro-$p$ group
of {\em width two} and {\em obliquity zero}, in the language
of~\cite{KL-GP}.
Pro-$p$ groups with those two properties were called {\em thin} in~\cite{CMNS},
and the properties carry over naturally to graded Lie algebras.

The following equivalent but more practical definition is
available for Lie algebras.
A {\em thin Lie algebra} is a graded Lie algebra $L=\bigoplus_{k=1}^{\infty}L_{k}$ over a field $\F$,
with $\dim(L_1)=2$ and satisfying the {\em covering property}
\begin{equation}\label{eq:covering}
L_{i+1}=[u, L_{1}] \quad \textrm{for all $0 \neq u\in L_{i}$, for all $i\geq 1$}.
\end{equation}
For simplicity in this paper we supplement this definition with the
assumption that $L$ has infinite dimension.
It follows that a thin Lie algebra $L$ has trivial centre, is generated by
$L_1$, and that homogeneous components can only have dimension one or two.
Those of dimension two are called {\em diamond}
(for reasons relating to the lattice of open normal subgroups in the
pro-$p$ group case, see~\cite{CMNS}),
but there are good reasons to grant the status of {\em fake diamonds}
to certain one-dimensional components.

Diamonds are numbered in the order of occurrence (including the fake ones, properly recognized), starting with the {\em first} diamond $L_1$.
It may happen that $L_1$ is the only diamond, but then $L$ belongs to the distinguished family of
{\em graded Lie algebra of maximal class,} which were introduced
in~\cite{CMN}.
Because those Lie algebras were completely classified
in~\cite{CN,Ju:maximal} (in the infinite-dimensional case),
we conveniently exclude them from the definition of thin Lie
algebras.
Then the degree $k$ where the second diamond occurs becomes the main parameter of a thin Lie algebra.
According to~\cite{AviJur}
(but see the simpler proof in~\cite[Section~3]{AviMat:A-Z},
where an updated definition accommodates for the peculiarities of characteristic two),
the parameter $k$ can only take the values $3$, $5$,
$q$ or $2q-1$, for some power $q$ of the characteristic $p$ in
case this is positive.
Each of the last two cases comprises a large variety of thin Lie
algebras, whose features are best described separately.
The introduction of~\cite{CaMa:Hamiltonian} contains a detailed exposition of the thin Lie algebras
with second diamond in degree $2q-1$.
The thin Lie algebras with second diamond in degree $q$ were named
{\em Nottingham Lie algebras} in~\cite{CaMa:Nottingham}.

According to~\cite{CaMa:Nottingham}, each diamond past the first in a Nottingham Lie algebra is assigned a {\em type,}
taking values in the underlying field augmented with infinity,
in a unique way which we recall in Section~\ref{sec:nott}.
The second diamond of a Nottingham Lie algebra always has type $-1$.
Diamonds of type zero or one are really one-dimensional
components, but it is convenient to allow them in certain
degrees and dub them {\em fake}.
Fake diamonds are recognized among the one-dimensional components
from certain relations which hold in them.
Because $1\equiv -1 \bmod 2$, in the special case of characteristic two the second diamond of a Nottingham
Lie algebra would be fake, and so one needs some care
to recognize a Nottingham Lie algebra as such.
This and more peculiarities of Nottingham Lie algebras of characteristic two
are discussed extensively in~\cite{AviMat:A-Z}, but in the present paper
we conveniently assume $p>2$.
We will, however, add separate remarks on how our results need to be
modified in characteristic two.

Various diamond patterns are possible, and we describe them in Section~\ref{sec:nott}.
There are Nottingham Lie algebras with all diamonds of infinite type, and others
with all diamonds of finite type.
The goal of this paper is a construction (whence an existence proof) for
Nottingham Lie algebras having diamonds of both finite and infinite types.
They have diamonds in all degrees congruent to $1$ modulo $q-1$,
but only one diamond every $p^s$ diamonds has finite type (starting with the second diamond, of necessity).
Furthermore, the finite diamond types follow an arithmetic progression, giving the algebras a periodic structure.
A distinction arises according to
whether this arithmetic progression is entirely contained in the prime field, or not.
Fake diamonds only occur in the former case.
This is actually the easier case, and its special case
where the arithmetic progression is the constant sequence $-1$ was already considered in~\cite{AviMat:A-Z}.
 
We state and prove our main results in Sections~\ref{sec:prime-field} and~\ref{sec:big-field}, respectively.

As for other examples of thin Lie algebras produced so far (including those with second diamond in degree $2q-1$), those with a periodic structure
can be obtained through a {\em loop algebra} construction, starting from a finite-dimensional
Lie algebra with a suitable cyclic grading.
In most cases those finite-dimensional Lie algebras are simple (or close to simple)
Lie algebras of the Cartan type $H$ (Hamiltonian).
In this paper we need two types of simple Hamiltonian Lie algebras, of dimension a power of $p$ and two less than a power of $p$,
whose definitions we recall in Section~\ref{sec:Cartan}.

While the correct Lie algebra to employ is generally easy to guess for dimension reasons
(where the absence of fake diamonds, or the presence of one or two in each period, calls for a Lie algebra of dimension a power of $p$, or one or two less),
explicitly producing a suitable cyclic grading can be a real challenge without the proper tool.
In lucky situations one can obtain the required cyclic gradings starting from the natural gradings
of those Hamiltonian Lie algebras.
In other cases one needs to pass to new gradings by what we may call a {\em grading switching}.
This procedure is related to {\em toral switching,} a fundamental tool in the theory of modular Lie algebras,
but it needs to be more general in one respect, because the cyclic grading of interest may not be
associated to a torus in any way.

Like toral switching, grading switching is based on taking some version of an exponential of a derivation
of the Lie algebra.
A toral switching based on {\em Artin-Hasse exponentials,} which makes sense for nilpotent derivations $D$
and reduces to {\em truncated exponentials} $\sum_{i=0}^{p-1}D^i/i!$ when $D^p=0$,
was described in~\cite{Mat:Artin-Hasse}.
In the present paper this would only allow one to deal with the case where the arithmetic progression of finite diamond types
is contained in the prime field.
In the other case we need a special instance of a completely general version of grading switching,
valid for arbitrary derivations, which is developed in~\cite{AviMat:Laguerre}.
We provide the required details in Section~\ref{sec:Laguerre}.

In his master's thesis~\cite{Sca:thesis}, written under the direction of the first author,
Claudio Scarbolo gave a construction for our Nottingham Lie algebras of Section~\ref{sec:big-field}
in the case of characteristic two,
based on direct calculations and taking advantage of
certain peculiarities of the characteristic
(see Remark~\ref{rem:char2-big}).

\section{Nottingham Lie algebras}\label{sec:nott}

We summarize here the more complete discussion of Nottingham Lie algebras given in~\cite[Section~2]{AviMat:A-Z},
restricting ourselves to information which is essential to our present goals.
Thus, suppose $L=\bigoplus_{i=1}^{\infty} L_{i}$ is a thin Lie algebra with second diamond in degree $q>3$,
a power of the odd characteristic $p$.
As anticipated in the Introduction, we generally call such $L$ a Nottingham Lie
algebra (but see~\cite[Remark~3.2]{AviMat:A-Z} for a motivated exclusion of one case with $q=5$).

Choose a nonzero element $Y\in L_1$ with $[L_2,Y]=0$.
According to~\cite{CaJu:quotients} we have
\begin{equation}\label{eq:first_chain}
C_{L_1}(L_2)=\cdots=C_{L_1}(L_{q-2})=\langle Y \rangle,
\end{equation}
where
$C_{L_1}(L_i)= \{ a \in L_1 \mid [a,b]=0 \textrm{ for every $b \in L_i$} \}$,
the centralizer of $L_i$ in $L_1$.
It was shown in~\cite{Car:Nottingham} (but see the more accessible proof given in~\cite[Section~2]{AviMat:A-Z}), that one can choose
$X\in L_1\setminus\langle Y\rangle$ such that
\begin{equation}\label{eq:second_diamond_type}
[V,X,X]=0=[V,Y,Y], \quad [V,Y,X]=-2[V,X,Y],
\end{equation}
where $V$ is any nonzero element of $L_{q-1}$.
Here we adopt the left-normed convention for iterated Lie brackets, namely, $[a,b,c]$ denotes $[[a,b],c]$.
 
An element $X$ satisfying relations~\eqref{eq:second_diamond_type}
is determined up to a scalar multiple, and so is $Y$,
which is any nonzero element of $C_{L_1}(L_2)$.
Relations~\eqref{eq:second_diamond_type} also imply that $L_{q+1}$, the homogeneous component
immediately following the second diamond, has dimension one.
 
This extends to a more general fact about thin Lie algebras
(not necessarily with second diamond in degree $q$):
in a thin Lie algebra of arbitrary characteristic with $\dim(L_3)=1$,
two consecutive components cannot both be diamonds.

Suppose now that $L_{i}$ is a diamond of $L$ in degree $i>1$.
Let $V$ be a non-zero element in $L_{i-1}$, which has dimension one according to the previous paragraph,
whence $[V,X]$ and $[V,Y]$ span $L_i$.
If the relations
\begin{equation}\label{eq:diamond_type}
[V,Y,Y]=0=[V,X,X], \quad
(1- \mu)[V,X,Y]=\mu [V,Y,X],
\end{equation}
hold for some $\mu \in \F$, then we say that the diamond $L_i$ has (finite) type  $\mu$.
In particular, relations~\eqref{eq:second_diamond_type} postulate that the second diamond $L_q$ has type $-1$.
The third relation in Equation~\eqref{eq:diamond_type} has a more natural appearance when written in terms of the generators
$Z=X+Y$ and $Y$, which were more convenient in~\cite{CaMa:Nottingham}, namely,
$[V,Z,Y]=\mu [V,Z,Z]$.
We also define $L_i$ to be a diamond of
type $\infty$ by interpreting the third relation in Equation~\eqref{eq:diamond_type} as $-[V,X,Y]=[V,Y,X]$ in this case.
We stress that the type of a diamond of a Nottingham Lie algebra $L$ is
independent of the choice of the graded generators $X$ and $Y$.
In fact, we have seen above that our requirement $[L_2,Y]=0$ together with relations~\eqref{eq:second_diamond_type}
determine $X$ and $Y$ up to scalar multiples,
and replacing them with scalar multiples does not affect relations~\eqref{eq:diamond_type}.

According to the defining relations~\eqref{eq:diamond_type}, a diamond of type $\mu=0$ should satisfy $[V,X,Y]=0=[V,X,X]$, while
a diamond of type $\mu=1$ should satisfy $[V,Y,X]=0=[V,Y,Y]$.
But then $[V,X]$, or $[V,Y]$, respectively, would be a central element in $L$ and,
therefore, should vanish because of the covering property~\eqref{eq:covering}.
Hence, $L_i$ would be one-dimensional after all, and hence not a genuine diamond.
Thus, strictly speaking, diamonds of type $0$ or $1$ cannot occur.
Nevertheless, we allow ourselves to call {\em fake diamonds} certain one-dimensional homogeneous components of $L$
and assign them type $0$ or $1$ if the corresponding relations hold, as given in Equation~\eqref{eq:diamond_type}.
This should be regarded as a convenient piece of terminology rather than a formal definition:
whenever a one-dimensional component $L_i$ formally satisfies the relations of a fake diamond of type $1$,
the next component $L_{i+1}$ satisfies those of a fake diamond of type $0$.
However, as a rule, only one of them ought to be called a fake diamond,
usually because it fits into a sequence of (genuine) diamonds occurring at regular distances.

Before continuing with our description of basic properties of Nottingham Lie
algebras we briefly comment on what happens if we temporarily relax our blanket assumptions $p$ odd and $q>3$.
Many thin Lie algebras $L$ exist where $L_3$ is the second diamond, and a summary of the known ones is
given in~\cite[Remark~3.4]{AviMat:A-Z}.
In particular, in characteristic $p=3$ they
include Lie algebras which ought to be included in the class of
Nottingham Lie algebras, see~\cite[Remark~3.4]{AviMat:A-Z}.
This is so when generators $X$ and $Y$ can be chosen such that
relations~\eqref{eq:second_diamond_type} hold,
even though uniqueness of $\langle X\rangle$ and $\langle Y\rangle$ fails in this case.
With this extended definition the Nottingham Lie algebras which we construct in Sections~\ref{sec:big-field} and~\ref{sec:prime-field}
will be such also for $q=3$, with the diamond types determined by relations~\eqref{eq:diamond_type} as usual.
The picture for characteristic two is more peculiar, because the second
diamond fails to reveal itself as such, being fake of type $1=-1$.
It can even occur that all diamonds past the first are fake, and
such a Nottingham Lie algebra disguises itself as a graded Lie
algebra of maximal class.
We refer the reader to the last part
of~\cite[Section~3]{AviMat:A-Z} for a discussion,
we safely exclude characteristic two from our statements, but
provide an interpretation for them in Remarks~\ref{rem:char2-big} and~\ref{rem:char2-prime}.

Resuming our general discussion of Nottingham Lie algebras, it was proved in~\cite{CaMa:Nottingham} that
\begin{equation}
\label{eq:second_chain}
 C_{L_1}(L_{q+1})= \cdots = C_{L_1}(L_{2q-3})=\langle Y \rangle,
\end{equation}
provided $p>5$.
In particular, $L$ cannot have a third diamond in degree lower than $2q-1$.
The assumption on the characteristic was weakened in~\cite[Proposition~5.1]{Young:thesis}
to $p>3$ and $q\neq 5$ (a necessary exception).
We refer the reader to~\cite[Section~2]{AviMat:A-Z} for a
discussion of Nottingham Lie algebras where the third genuine diamond occurs in degree higher than
$2q-1$.
Assuming that $L$ does have a genuine third diamond of finite type $\mu_3$ (hence different from $0$ and $1$,
two exceptional cases which are also discussed in~\cite{CaMa:Nottingham})
in degree $2q-1$, detailed structural information on $L$ was obtained in~\cite{CaMa:Nottingham}, for $p>5$.
 
It was shown there that $L$ is uniquely determined as a thin Lie algebra, that the diamonds occur in each degree
congruent to $1$ modulo $q-1$, and that their types follow an arithmetic progression
(determined by $\mu_2=-1$ and $\mu_3$).
The stated uniqueness of $L$ was proved by showing that the
relations of degree up to $2q$, in the generators $X$ and $Y$,
which $L$ satisfy, actually define a central (or second central in one
case) extension of $L$.
Diamonds appearing at regular intervals, with types following an
arithmetic progression, reveal a periodic
structure which in our context is captured by the following
definition.

\begin{definition}\label{def:loop_algebra}
Let $S$ be a finite-dimensional Lie algebra, over the field $\F$,
with a cyclic grading $S=\bigoplus_{k\in\Z/N\Z}S_k$.
Let $U$ be a subspace of $S_{\bar 1}$ and let $T$ be an indeterminate over $\F$.
The {\em loop algebra} of $S$ with respect to the given grading and the subspace $U$ is the Lie subalgebra of
$S\otimes\F[T]$ generated by $U\otimes T$.
\end{definition}

The subspace $U$ need not be mentioned when it coincides with $S_{\bar 1}$, as will always be the case in this
paper.
Loosely speaking, the loop algebra construction produces an infinite-dimensional Lie algebra from $S$ by {\em replicating} its structure periodically.
Proving that such a loop algebra is thin (in particular, the verification of the covering property~\eqref{eq:covering})
can then conveniently be done inside the finite-dimensional Lie algebra $S$.
Beware that if $S$ is graded over an arbitrary cyclic group $G$ of order $N$ we need
to fix a specific isomorphism of $G$ with $\Z/N\Z$
(or, equivalently, a distinguished generator $\bar 1$ for $G$)
for the corresponding loop algebra to be defined.

Existence results for the Nottingham Lie algebras $L$
whose uniqueness
was proved in~\cite{CaMa:Nottingham} have been proved in various articles,
always by producing the required algebra as a loop algebra of some finite-dimensional Lie algebra $S$
with respect to a suitable cyclic grading.
In each case $L$ has diamonds, possibly fake, in all degrees
congruent to $1$ modulo $q-1$.
If all diamonds are genuine (not fake) then
$S$ is a simple Lie algebra of dimension a power of $p$.
If fake diamonds are present then
$S$ is simple of dimension two less than a power of $p$,
possibly extended by an outer derivation, in the two cases
where backwards continuation of the arithmetic progression of diamond types
would predict the first diamond $L_1$ to be fake
(which it cannot be).

The special case $\mu_3=-1$, where the arithmetic progression of
diamond types is a constant sequence, was dealt with
in~\cite{Car:Nottingham}, taking $S=W(1;n)$, a {\em Zassenhaus algebra}.
When $q=p$ the corresponding loop algebra $L$ is the graded Lie
algebra associated with the lower central series of the Nottingham
group over the field of $p>2$ elements.
For the remaining cases a dichotomy arises according to whether
the arithmetic progression of diamond types is contained in the
prime field, or not, which will entail the absence or presence of
fake diamonds.
The Nottingham algebras with $\mu_3\in\F_p\setminus\{-1\}$
were produced in~\cite{Avi}, as loop algebras of simple {\em graded Hamiltonian algebras} $H(2;(1,n))^{(2)}$
(extended by an outer derivation when $\mu=-2,-3$),
and those with $\mu_3\not\in\F_p$ in~\cite{AviMat:A-Z}, as loop algebras of {\em Albert-Zassenhaus algebras} $H(2:(1,n);\Phi(1))$.
Those with $\mu_3\in\F_p$ can also be recovered from those with $\mu_3\not\in\F_p$
through a deformation argument, which is explained
in~\cite[Section~7]{AviMat:A-Z}.

The third diamond can also have type $\mu_3=\infty$.
In that case a great variety of diamond patterns are possible,
most of which are not periodic, as shown in~\cite[Theorem~3.3]{Young:thesis}.
One periodic possible pattern has diamonds occurring in all
degrees congruent to $1$ modulo $q-1$, with most of them having
infinite type except for one diamond of finite type every $p^s$ diamonds,
for some positive integer $s$,
and with the finite types following an arithmetic progression.
Thus, a diamond of finite type occurs in each degree congruent to
$1$ modulo $p^s(q-1)$, and the arithmetic progression of finite diamond types is determined by the
earliest such diamond after $L_q$, which occurs in degree
$q+p^s(q-1)$.
One such algebra was constructed in~\cite[Section~5]{AviMat:A-Z},
as a loop algebra of an Albert-Zassenhaus algebra
$H(2:(s+1,n);\Phi(1))$, where all finite types equal $-1$.

The main goal of this paper, in Sections~\ref{sec:big-field} and~\ref{sec:prime-field}, is to provide analogous constructions
for the non-constant arithmetic progressions of finite diamond
types.
Here, too, we have a dichotomy according to whether the
progression is entirely contained in the prime field $\F_p$, or
not.
The loop algebra construction will then employ different finite-dimensional Lie algebras
$S$, namely, a simple graded Hamiltonian algebra $H(2;(s+1,n))^{(2)}$
in the former case, and an Albert-Zassenhaus algebra
$H(2:(s+1,n);\Phi(1))$ in the latter.

\section{Grading switching}\label{sec:Laguerre}

The loop algebra construction described in the Section~\ref{sec:nott}
requires specifying a cyclic grading of a finite-dimensional
Lie algebra $S$.
As briefly mentioned there, and more extensively discussed
in~\cite{CaMa:Hamiltonian,AviMat:A-Z}, the Lie algebras $S$ which have been used to
construct thin Lie algebras in this way belong to the classical types $A_1$ and $A_2$
(which are the only ones occurring in characteristic zero as well, see~\cite{CMNS}),
or to the generalized Cartan types $W$ and $H$.

In this section we discuss
a partly new technique to produce suitable cyclic gradings.

Gradings over a cyclic group can be obtained as specializations of
gradings over abelian groups of larger rank, by passing to cyclic quotients of the
latter.
One occurrence of gradings for our Lie algebras $S$ is as generalized root space
decompositions with respect to tori in $\Der(S)$, but Lie algebras
of Cartan type come equipped with natural gradings which are not of that
type (because the grading groups do not have exponent $p$).
Suitable cyclic specializations of the latter gradings can
sometimes produce thin Lie algebras as the corresponding loop
algebras (an example from~\cite{AviMat:A-Z} is recalled at the end of this section),
but in order to produce thin Lie algebras with different diamond
type patterns one may have to pass to new cyclic gradings.

In case of gradings associated with tori, one fundamental tool to {\em switch gradings} is
available.
The {\em toral switching} technique plays a crucial role in the
classification theory of simple modular Lie algebras.
The basic idea, originating from~\cite{Win:toral}, but then substantially generalized
in~\cite{BlWil:rank-two}, and finally~\cite{Premet:Cartan}, is to apply to
a maximal torus of a simple restricted modular Lie algebra some sort of exponential of
an inner derivation, in order to produce a new torus with better properties.
Associated with the new torus is a new grading of the algebra.
Part of the process can be placed in a more general setting, which we describe now, where
a new grading can be obtained from a given one without the intervention of any torus.
To this purpose it is not restrictive to consider only gradings over a cyclic group.

In characteristic zero exponentials of derivations are automorphisms
when they are defined (for example when the derivation is
nilpotent).
This is not the case in positive characteristic,
and toral switching takes advantage of this fact.
In prime characteristic $p$, evaluating the exponential series $\exp(X)=\sum_{i=0}^{\infty}X^i/i!$ on a derivation $D$
only makes sense under the condition $D^p=0$, allowing one to neglect
all terms of the exponential series whose denominator is a multiple of $p$.
More rigorously, instead of the ordinary exponential series
one considers the {\em truncated exponential} $E(X)=\sum_{i=0}^{p-1}X^i/i!$,
which can be evaluated on an arbitrary derivation $D$
but is most closely assimilated to $\exp(D)$ when $D^p=0$.
Although this condition on $D$ does not ensure that $E(D)$ is an automorphism
(only $D^{(p+1)/2}=0$ does, for $p$ odd),
it forces $E(D)$ to send a grading into a grading
in the following result, which is~\cite[Theorem~2.3]{Mat:Artin-Hasse}.

\begin{theorem}\label{thm:exp}
Let $A$ be a non-associative algebra over a field of prime
characteristic $p$, graded over the integers modulo $m$. Suppose
that $A$ has  derivation $D$ such that $D^{p}=0$ graded of degree $d$, with
$m\mid pd$.
Then the direct sum decomposition $A=\bigoplus_{i}E(D)A_{i}$ is a grading over
the integers modulo $m$.
\end{theorem}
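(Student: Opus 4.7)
The plan is to establish the two requirements of a grading separately: that $A=\bigoplus_i E(D)A_i$ is a genuine direct sum decomposition of the underlying vector space, and that $[E(D)A_i, E(D)A_j]\subseteq E(D)A_{i+j}$ for all $i,j$.

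First I would show that $E(D)$ is a vector-space automorphism of $A$, with inverse $E(-D)$. Expanding $E(D)E(-D)=\sum_{r,s=0}^{p-1}(-1)^s D^{r+s}/(r!\,s!)$ and grouping by $n=r+s$, the inner binomial sum $\sum_{s}(-1)^s\binom{n}{s}$ vanishes for $1\le n\le p-1$, while terms with $n\ge p$ are annihilated by the hypothesis $D^p=0$. Hence $E(D)E(-D)=\Id$, and applying this bijection to the original grading $A=\bigoplus A_i$ immediately yields the claimed decomposition.

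The substantive step is the bracket compatibility. For fixed $a\in A_i$ and $b\in A_j$ I would compute $E(-D)[E(D)a, E(D)b]$ via the iterated Leibniz formula $D^t[D^ra, D^sb]=\sum_k\binom{t}{k}[D^{r+k}a, D^{s+t-k}b]$, then reindex by $u=r+k$, $v=s+t-k$. A Vandermonde collapse on the inner sum over $(r,s)$ with $r+s$ fixed, followed by the standard identity $\sum_{k=0}^{p-1}(-1)^k\binom{N}{k}=(-1)^{p-1}\binom{N-1}{p-1}$, should yield the closed form
\begin{equation*}
E(-D)[E(D)a, E(D)b]=\sum_{u,v=0}^{p-1}C_{u,v}\,[D^ua, D^vb], \qquad C_{u,v}=\frac{(-1)^{p-1}}{u!\,v!}\binom{u+v-1}{p-1}.
\end{equation*}
Lucas's theorem, combined with the trivial vanishing $\binom{N}{p-1}=0$ for $0\le N\le p-2$, then forces $C_{u,v}=0$ for $(u,v)\in[0,p-1]^2$ except when $u+v\in\{0,p\}$.

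Each surviving term $[D^ua, D^vb]$ has degree $i+j+(u+v)d$ in the original grading, which is congruent to $i+j$ modulo $m$ in view of the hypothesis $m\mid pd$. Consequently $E(-D)[E(D)a, E(D)b]\in A_{i+j}$, equivalently $[E(D)a, E(D)b]\in E(D)A_{i+j}$, and the grading is established. The main obstacle is the combinatorial identification of $C_{u,v}$: the fourfold sum over $(r,s,t,k)$ with constraints $0\le r,s,t\le p-1$ and $0\le k\le t$ demands careful bookkeeping in the reindexing, but once the closed form is secured the Lucas argument and the final degree check are essentially one-liners.
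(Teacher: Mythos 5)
Your proof is correct. The paper itself gives no proof of this statement, quoting it as Theorem~2.3 of the cited reference on Artin--Hasse exponentials of derivations, and your argument is essentially the standard one from that source: the computation of $E(-D)[E(D)a,E(D)b]$ via Leibniz does collapse, after the substitution $c=k$, $d=t-k$ and Vandermonde, to $C_{u,v}=\frac{1}{u!\,v!}\sum_{n=0}^{p-1}(-1)^n\binom{u+v}{n}=\frac{(-1)^{p-1}}{u!\,v!}\binom{u+v-1}{p-1}$, which I have checked vanishes for $0\le u,v\le p-1$ unless $u+v\in\{0,p\}$, and the surviving terms shift the degree by $0$ or $pd\equiv 0\pmod m$ as you say. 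The preliminary observation that $E(D)E(-D)=\Id$ when $D^p=0$ (so that the decomposition is genuinely direct, even though $E(D)$ need not be an algebra automorphism) is also correct and necessary.
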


This is the {\em grading switching} mentioned earlier.
The main result of~\cite{Mat:Artin-Hasse} is that Theorem~\ref{thm:exp} remains true
under the weaker hypothesis that $D$ is nilpotent, provided that $E(D)$ is replaced by
$E_p(D)$, where $E_p(X)$ is the {\em Artin-Hasse exponential} series.

The nilpotency assumption on $D$ is too restrictive for the applications in this paper.
In~\cite[Theorem~5.1]{AviMat:Laguerre} we have further extended Theorem~\ref{thm:exp} to arbitrary derivations,
where the place of $E_p(D)$ is taken by certain maps defined by means of
(generalized) Laguerre polynomials.
To avoid unnecessary complications we limit ourselves to quoting the special case we need
of that general result, where $D$ satisfies a certain equation.

The classical (generalized) Laguerre polynomial of degree
$n \geq 0$ can be defined as
$L_n^{(\alpha)}(X)=\sum_{k=0}^{n}\binom{\alpha+n}{n-k}(-X)^k/k!$.
The parameter $\alpha$ is classically a complex number, but the definition allows one to take $\alpha$ in any field
where $n!$ is invertible.
Let $\F$ be a field of characteristic $p>0$, let $\alpha \in \F$, and consider the Laguerre polynomial
\[
L_{p-1}^{(\alpha)}(x)=\sum_{k=0}^{p-1}\binom{\alpha+p-1}{p-1-k} \frac{(-x)^k}{k!}.
\]
When $\alpha=0$ this coincides with
the truncated exponential $E(X)$.

\begin{theorem}\label{thm:special}
Let $A=\bigoplus_k A_k$ be a non-associative algebra over a field $\F$ of prime characteristic $p$,
graded over the integers modulo $m$.
Suppose that $A$ has a graded derivation $D$ of degree $d$
such that  $D^{p^2}=\lambda^{(p-1)p}D^p$ for some nonzero $\lambda \in \F$, with $m\mid pd$.
Suppose that there exists $\pi \in \F$  with
$\pi^p-\pi=\lambda^p$.
Let $A=\bigoplus_{a\in\F_p}A^{(a\lambda)}$ be the decomposition of $A$
into a direct sum of generalized eigenspaces for $D$,
and let $\mathcal{L}_D:A\to A$ be the linear map whose restriction to
$A^{(a\lambda)}$coincides with $L_{p-1}^{(a\pi)}(D)$.
Then
$A=\bigoplus_k \mathcal{L}_D(A_k)$ is a grading of $A$ over the integers modulo $m$.
\end{theorem}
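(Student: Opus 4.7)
The plan is to derive the statement as a specialization of \cite[Theorem~5.1]{AviMat:Laguerre}, organized around three steps.

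First, I would unpack the hypothesis $D^{p^2}=\lambda^{p(p-1)}D^p$ to produce the generalized eigenspace decomposition. Rewriting it as $(D^p)^p=\lambda^{p(p-1)}D^p$ shows that $D^p$ is annihilated by $X^p-\lambda^{p(p-1)}X=X\prod_{a\in\F_p^*}(X-a\lambda^p)$, using $a^{p-1}=1$ for $a\in\F_p^*$. Combining this with the characteristic-$p$ identity $(D-a\lambda I)^p=D^p-a\lambda^p I$, valid because $a\lambda I$ commutes with $D$, we obtain
\[
\bigl(\textstyle\prod_{a\in\F_p}(D-a\lambda I)\bigr)^p=0.
\]
Since the factors pairwise commute and their eigenvalues $a\lambda$ are distinct in $\F$, this yields the stated decomposition $A=\bigoplus_{a\in\F_p}A^{(a\lambda)}$, with $D-a\lambda I$ nilpotent on $A^{(a\lambda)}$. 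Homogeneity of $D$ of degree $d$ in the $\Z/m\Z$-grading moreover makes each summand $A^{(a\lambda)}$ graded, so $A_k=\bigoplus_{a\in\F_p}(A_k\cap A^{(a\lambda)})$ for each $k$.

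Second, I would verify that $\mathcal{L}_D$ is invertible, so that $A=\bigoplus_k\mathcal{L}_D(A_k)$ is a genuine direct sum decomposition. On $A^{(a\lambda)}$ the operator $D$ takes the form $a\lambda I+N_a$ with $N_a$ nilpotent, so the restriction of $L_{p-1}^{(a\pi)}(D)$ to $A^{(a\lambda)}$ is invertible precisely when the scalar $L_{p-1}^{(a\pi)}(a\lambda)$ is nonzero. The Artin--Schreier condition $\pi^p-\pi=\lambda^p$ is tailored exactly to guarantee this nonvanishing via the closed-form evaluation of truncated Laguerre polynomials available in characteristic $p$ and recorded in \cite{AviMat:Laguerre}.

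Third, and this is the heart of the matter, I would show multiplicativity: $\mathcal{L}_D(A_i)\cdot\mathcal{L}_D(A_j)\subseteq\mathcal{L}_D(A_{i+j})$. Since $D$ is a derivation, the eigenspace decomposition is multiplicative, $A^{(a\lambda)}\cdot A^{(b\lambda)}\subseteq A^{((a+b)\lambda)}$, and the Leibniz rule governs the action of $D$ on a product. For $x\in A_i\cap A^{(a\lambda)}$ and $y\in A_j\cap A^{(b\lambda)}$, one needs a functional identity of the form
\[
L_{p-1}^{((a+b)\pi)}(D)(x\cdot y)=L_{p-1}^{(a\pi)}(D)(x)\cdot L_{p-1}^{(b\pi)}(D)(y)+\sum_{\ell\neq 0}E_\ell,
\]
where each error term $E_\ell$ is homogeneous of degree $i+j+\ell d$ in the original grading. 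Reducing modulo $m$ and using $m\mid pd$, the errors reassemble into the appropriate $\mathcal{L}_D(A_k)$-components, so that the leading term lands in $\mathcal{L}_D(A_{i+j})$ as required.

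The main obstacle is establishing the functional identity in the third step. This is the technical core of the general grading switching theorem proved in \cite{AviMat:Laguerre}, where a Leibniz-type rule for the operators $L_{p-1}^{(\alpha)}(D)$ under the multiplication is derived from combinatorial identities for generalized Laguerre polynomials. The cleanest route, which I would follow, is to verify that our hypotheses on $D$ and the existence of $\pi$ with $\pi^p-\pi=\lambda^p$ place us in the hypotheses of \cite[Theorem~5.1]{AviMat:Laguerre}, so that the conclusion is a direct specialization.
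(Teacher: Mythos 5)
Your proposal is correct and takes essentially the same route as the paper: the paper's entire proof is the one-line specialization of \cite[Theorem~5.1]{AviMat:Laguerre} obtained by taking $h(T)=0$ and $g(T)=\pi\lambda^{-p}T^p$, so that $g(a\lambda)=a\pi$, which is exactly the reduction you arrive at in your final paragraph. Your additional material (deriving the generalized eigenspace decomposition from $D^{p^2}=\lambda^{p(p-1)}D^p$, and sketching the invertibility and multiplicativity steps) correctly describes the internal mechanics of the cited general theorem rather than constituting a separate argument.
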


\begin{proof}
The conclusion follows from~\cite[Theorem~5.1]{AviMat:Laguerre} by taking $h(T)=0$ and
$g(T)=\pi\lambda^{-p} T^p$, whence $g(a \lambda)=a \pi$.
\end{proof}

In the special case where $D^p=0$, the value of $\lambda$ is immaterial because $A=A^{(0)}$,
and Theorem~\ref{thm:special} becomes Theorem~\ref{thm:exp}.

\section{Certain Lie algebras of Cartan type}\label{sec:Cartan}

We briefly recall here the definitions
of certain Lie algebras of Cartan type belonging to the Hamiltonian series.
A broader discussion of these Lie algebras in the context of thin Lie algebras
can be found in~\cite{AviMat:A-Z}.

Let $\F$ be a field of prime characteristic $p$.
The algebra $\mathcal{O}(1,n)$ of \emph{divided powers} in one indeterminate $x$ of height $n$,
is the associative $\F$-algebra with basis elements $x^{(i)}$, for $0\leq i\leq p^{n}$,
and multiplication defined by
$x^{(i)}\cdot x^{(j)}=\binom{i+j}{i} x^{(i+j)}$.
The algebra
$\mathcal{O}(2,(n_1, n_2))$ of divided powers in two
indeterminates $x$ and $y$ of heights $n_1$ and $n_2$
may be identified with the tensor product algebra
$\mathcal{O}(1;n_1)\otimes\mathcal{O}(1;n_2)$.
A basis is given by the monomials
$x^{(i)}y^{(j)}$, for
$0\leq i<p^{n_1}$ and $0\leq j<p^{n_2}$, which are multiplied according to the rule
$x^{(i)} y^{(j)}x^{(k)}y^{(l)}=\binom{i+k}{i}\binom{j+l}{j} x^{(i+k)}y^{(j+l)}$.
We use the
standard shorthands $\bar{x}=x^{(p^{n_{1}}-1)}$ and
$\bar{y}=y^{(p^{n_{2}}-1)}$.
In the algebra $\mathcal{O}(1,1)$, which is isomorphic with $\F[x]/(x^p)$, we
may define a sort of generalized power
$(1+x)^{\alpha} = \sum_{i=0}^{p-1} \binom{\alpha}{i}i! x^{(i)}$,
where the exponent $\alpha$ is an arbitrary element of $\F$.
When $\alpha=0,1,\cdots, p-1$ this expression specializes to
the usual binomial theorem expressed in terms of divided powers.

The simple graded Hamiltonian algebra $H(2;(n_1,n_2))^{(2)}$
is defined as a subalgebra of the algebra of derivations of $\mathcal{O}(2,(n_1,
n_2))$.
However, for the present purposes we can identify it with the subspace
of $\mathcal{O}(2;(n_1,n_2))$ spanned by its monomials $x^{(i)}y^{(j)}$
with $(i,j)\neq(0,0),(p^{n_1},p^{n_2})$,
endowed with the Lie bracket
\begin{equation}\label{eq:Poisson_0}
\begin{split}
\{x^{(i)}
y^{(j)},
x^{(k)}
y^{(l)}\}
&=
x^{(i)}
y^{(j-1)}
x^{(k-1)}
y^{(l)}-
x^{(i-1)}
y^{(j)}
x^{(k)}
y^{(l-1)}
\\
&=
N(i,j,k,l)\,
x^{(i+k-1)}
y^{(j+l-1)},
\end{split}
\end{equation}
where 
\[
N(i,j,k,l):=
\binom{i+k-1}{i}
\binom{j+l-1}{j-1}-
\binom{i+k-1}{i-1}
\binom{j+l-1}{j}.
\]
This Lie algebra is clearly graded (hence its name) over
$\Z\times\Z$, by assigning degree $(i,j)$ to the monomial
$x^{(i+1)}y^{(j+1)}$.
In odd characteristic it is a simple Lie algebra, of dimension $p^{n_1+n_2}-2$,
In characteristic two it is simple only when $n_1>1$ and $n_2>1$.

The Albert-Zassenhaus algebra
$H(2:(n_1,n_2);\Phi(1))$ can be identified with
$\mathcal{O}(2;(n_1,n_2))$ with the Lie bracket
\begin{align}
&\{x^{(i)}
y^{(j)},
x^{(k)}
y^{(l)}\}
=
N(i,j,k,l)\,
x^{(i+k-1)}
y^{(j+l-1)}
\quad\text{if $i+k>0$, and}\label{eq:Poisson_1}
\\
&\{y^{(j)},
y^{(l)}\}
=
\left(
\binom{j+l-1}{l}-
\binom{j+l-1}{j}
\right)\bar x
y^{(j+l-1)}.\label{eq:Poisson_exception}
\end{align}
Equations~\eqref{eq:Poisson_1} and~\eqref{eq:Poisson_exception} show that
$H(2;(n_1,n_2);\Phi(1))$
is graded over the group $\Z/p^{n_1}\Z\times\Z$
by assigning degree $(i+p^{n_1}\Z,j)$ to the monomial $x^{(i+1)}y^{(j+1)}$.
In odd characteristic this Lie algebra is simple.
In characteristic two its derived subalgebra $H(2;(n_1,n_2);\Phi(1))^{(1)}$ is simple,
and is spanned by the monomials $x^{(i)}y^{(j)}$
with $(i,j)\neq(p^{n_1},p^{n_2})$.

A {\em specialization} of this grading of
$H=H(2;(n_1,n_2);\Phi(1))$
was considered in~\cite[Section~5]{AviMat:A-Z},
namely, a cyclic grading
$H=\bigoplus_{k \in \Z/N \Z}H_k$
obtained by assigning degree $(1-q)i-j+N\Z$ to the
monomial $x^{(i+1)}y^{(j+1)}$,
where $q=p^{n_2}$ and $N=p^{n_1}(q-1)$.
The monomials $x$ and $\bar y$ acquire degree $1$ in this grading,
and one easily sees that they generate $H$.
According to~\cite[Theorem~5.1]{AviMat:A-Z}, if $p>2$
the loop algebra of $H$ with respect to this grading
is a Nottingham algebra with diamonds in all degrees
congruent to one modulo $q-1$,
all of type $\infty$
except for those in degree congruent to $q$ modulo $p^{n_1}(q-1)$,
which have type $-1$.
In the next two sections we show that different gradings of $H=H(2;(n_1,n_2);\Phi(1))$
lead to similar Nottingham Lie algebras, with the diamonds of type $-1$
replaced by diamonds whose types follow more general arithmetic
progressions.

\section{The big field case}\label{sec:big-field}

This section and the next one contain our main results, which are explicit constructions for the new
Nottingham Lie algebras announced in the Introduction.
They have a diamond in each degree congruent to $1$ modulo $q-1$,
with diamonds having infinite type except for one of finite type every
$p^s$ diamonds, and the finite types follow an arithmetic progression.
In this section we consider the case where the arithmetic progression is not entirely contained in the prime field.
A minimal requirement on the base field $\F$ is that it should contain the
the various finite diamond types, but we allow ourselves to further enlarge it later as needed.

We make the assumption that $p$ is odd and postpone a discussion of the case $p=2$ to Remark~\ref{rem:char2-big}.
Consider the Albert-Zassenhaus algebra $H=H(2;(s+1,n);\Phi(1))$,
for some $s,n>0$, and its derivation $D=(\ad y)^{p^s}$.
Writing each monomial in
$\mathcal{O}(2;(s+1,n))$
in the form  $x^{(ap^s)} x^{(k+1)}y^{(j+1)}$,
with $0\le a<p$, $-1\leq k<p^s-1$ and $-1\leq j<p^n-1$,
we have
\[
D(x^{(ap^s)} x^{(k+1)}y^{(j+1)})=
\begin{cases}
x^{((a-1)p^s)}x^{(k+1)}y^{(j+1)}& \textrm{if $a>0$,}\\
-jx^{((p-1)p^s)}x^{(k+1)}y^{(j+1)}& \textrm{if $a=0$.}
\end{cases}
\]
Consequently, $D^p$ acts semisimply on $H$, with eigenvalues in the prime field, as
\[
D^{p}(x^{(ap^s)} x^{(k+1)}y^{(j+1)})=-jx^{(ap^s)}
x^{(k+1)}y^{(j+1)},
\]
and hence $D^{p^2}=D^p$.

Consider the cyclic grading
$H=\bigoplus_{\ell\in \Z/N \Z}H_\ell$
obtained by assigning degree $(1-q)(ap^s+k)-j+N\Z$ to the
monomial $x^{(ap^s)}x^{(k+1)}y^{(j+1)}$,
where $q=p^n$ and $N=p^{s+1}(q-1)$.
This is the grading from~\cite[Section~5.1]{AviMat:A-Z}
which we recalled at the end of Section~\ref{sec:Cartan}, just in
a different notation.
Thus, $x$ and $\bar y$ acquire degree one in this grading.
The derivation $D$ is graded of degree $N/p$.
Furthermore, each homogeneous element in the grading is an
eigenvector for $D^p$.
Let $\sigma,\pi\in\F$ be such that $(\pi^p-\pi)\sigma^p=1$.
We apply the grading switching from~\cite{AviMat:Laguerre} to this grading,
in the form recalled earlier as Theorem~\ref{thm:special}
but with the derivation $\sigma^{-1}D$ in place of $D$,
and thus obtain another cyclic grading of $H$.

Explicitly, consider the elements
\[
\bar{e}_{j,k,a}=L_{p-1}^{(-j \pi)}\left(\sigma^{-1}D\right)
 (x^{(ap^{s})}x^{(k+1)}y^{(j+1)}),
\]
for $-1\le j<q-1$, $-1\le k<p^{s}-1$, and $a \in \F_{p}$.
According to Theorem~\ref{thm:special} they constitute a  basis of $H$,
graded over the integers modulo $N$ by assigning
$\bar{e}_{j,k,a}$ degree $(1-q)(ap^s+k)-j+N\Z$.
Each homogeneous component in this grading has dimension one, except for those in
degrees congruent to $1$ modulo $q-1$, which are two-dimensional.
For later convenience we multiply $\bar{e}_{j,k,a}$ by the
scalar
$c_{j,a}=a!\sigma^{a}\tbinom{-j \pi+a}{a}\tbinom{ -j\pi+p-1}{p-1}^{-1}$,
obtaining the elements
\begin{equation*}
\displaystyle{e_{j, k, a}=
c_{j,a}\bar{e}_{j,k,a}=
(1+ \sigma x^{(p^{s})})^{-j\pi +a}x^{(k+1)}y^{(j+1)}},
\end{equation*}
for $-1\le j<q-1$, $-1\le k<p^{s}-1$, and $a \in \F_{p}$.

Products between the basis elements $e_{j,k,a}$ are easily computed, and one finds
\begin{equation}\label{eq:eeq1}
\displaystyle{\{e_{j, k, a}, e_{l, h, b}\}=\left( \tbinom{k+h+1}{h} \tbinom{j+l+1}{j}-
\tbinom{k+h+1}{k} \tbinom{j+l+1}{l}\right)e_{j+l,k+h,a+b}}
\end{equation}
for $k+h>-1$ and
\begin{equation}\label{eq:eeq2}
\displaystyle{\{e_{j,-1,a}, e_{l,-1,b}\}=\sigma \left( \tbinom{j+l+1}{j}(-l \pi+b)-
\tbinom{j+l+1}{l}(-j \pi+a)\right)e_{j+l,p^s-2,a+b-1}}.
\end{equation}
In fact, because the basis is graded according to Theorem~\ref{thm:special}, the Lie bracket
$\{e_{j,k,a}, e_{l,h,b}\}$ is a scalar multiple of $e_{j+l,k+h,a+b}$, for $k+h>-2$.
Thus, we only have to compute the scalar factor, for example by computing
the coefficient of $x^{(k+h+1)}y^{(j+l+1)}$ in the result, noting that
$x^{(k+1)}y^{(j+1)}$ always appears with coefficient $1$ in $e_{j,k, a}$.
To do this it suffices to compute the Lie bracket of the
only relevant terms, which is $\{x^{(k+1)}y^{(j+1)},x^{(h+1)}y^{(l+1)}\}$.
Similarly, in the case $k+h=-2$, the
Lie bracket $\{e_{j, -1, a}, e_{l, -1, b}\}$ is a scalar multiple
of $e_{j+l,p^s-2,a+b-1}$.
The scalar can be recovered by computing
the coefficient of $x^{p^s-1}y^{j+l+1}$ in the result.
Here the Lie bracket of the relevant terms is
\[
\sigma (-j \pi +a)\{x^{(p^s)}y^{(j+1)}, y^{(l+1)}\}+
 \sigma (-l \pi +b)\{y^{(j+1)}, x^{(p^s)}y^{(l+1)}\}.
\]

We are now ready to state and prove the main result of this section.

\begin{theorem}\label{thm:big_field}
Let $\F$ be a field of odd characteristic $p$, let $n,s$ be a positive integer, and set $q=p^n\ge p$.
Assume that there are $\sigma,\pi\in\F$ with $(\pi^p-\pi)\sigma^p=1$.
Then the elements
\[
e_{j,k,a}=(1+ \sigma x^{(p^{s})})^{-j\pi +a} x^{(k+1)}y^{(j+1)},
\]
for $-1\le j<q-1$, $-1\le k<p^{s}-1$, and $a \in \F_{p}$,
form a graded basis of $H(2;(s+1,n);\Phi(1))$ over the integers modulo
$(q-1)p^{s+1}$, where
$e_{j,k, a}$ has degree
$(1-q)(a p^{s}+k)-j \pmod{(q-1)p^{s+1}}$.

The corresponding loop algebra $L$ is thin, with second diamond in degree $q$.
The diamonds occur in each degree
of the form $t(q-1)+1$, with the diamond type being finite exactly when $t\equiv 1\mod p^{s}$.
Those finite types follow
an arithmetic progression, not entirely contained in the prime field.
The diamond in degree $(q-1)(p^s+1)+1$ has type
$\nu=-1+1/\pi$.
\end{theorem}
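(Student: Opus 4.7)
The plan is to deduce the theorem from Theorem~\ref{thm:special} together with the multiplication rules~\eqref{eq:eeq1}-\eqref{eq:eeq2} derived above. First I would invoke Theorem~\ref{thm:special} for the derivation $\sigma^{-1}D$ of $H=H(2;(s+1,n);\Phi(1))$. The preamble computation gives $D^{p^2}=D^p$, whence $(\sigma^{-1}D)^{p^2}=\lambda^{p(p-1)}(\sigma^{-1}D)^p$ with $\lambda=\sigma^{-1}$, and the hypothesis $(\pi^p-\pi)\sigma^p=1$ reads $\pi^p-\pi=\lambda^p$. The monomial $x^{(ap^s)}x^{(k+1)}y^{(j+1)}$ lies in the generalized $\sigma^{-1}D$-eigenspace of eigenvalue $-j\sigma^{-1}$ (since its $D^p$-eigenvalue is $-j$), so Theorem~\ref{thm:special} applies and produces the new graded basis $\bar e_{j,k,a}$, each carrying the degree assigned in the statement. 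The closed-form identity $c_{j,a}\bar e_{j,k,a}=(1+\sigma x^{(p^s)})^{-j\pi+a}x^{(k+1)}y^{(j+1)}$ would be verified by direct calculation: both sides lie in the $D$-invariant $p$-dimensional subspace spanned by $\{x^{(ip^s)}x^{(k+1)}y^{(j+1)}\}_{i=0}^{p-1}$, so it suffices to match the coefficient of each $x^{(ip^s)}x^{(k+1)}y^{(j+1)}$, using the generalized-power expansion $(1+\sigma x^{(p^s)})^\alpha=\sum_{i=0}^{p-1}\binom{\alpha}{i}i!\sigma^i x^{(ip^s)}$, the explicit cyclic action of $D$ on this subspace, and the definitions of $L_{p-1}^{(-j\pi)}$ and $c_{j,a}$.

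For the graded dimensions, an enumeration shows that $(1-q)(ap^s+k)-j\equiv\ell\pmod{N}$ has a unique solution in $(j,k,a)$ unless $\ell\equiv 1\pmod{q-1}$, in which case both $j=-1$ and $j=q-2$ contribute, giving $\dim H_\ell=2$. This uses $\gcd(q-1,p^{s+1})=1$ to decouple the congruence into its two prime parts. In particular $H_1=\langle e_{-1,0,0},e_{q-2,-1,0}\rangle$, and the two-dimensional components occur precisely at $\ell=t(q-1)+1$. The loop algebra $L$ generated by $H_1\otimes T\subset H\otimes\F[T]$ is then thin once one verifies the covering property~\eqref{eq:covering} inside $H$: for any nonzero $u\in H_\ell$ one checks that $\{u,H_1\}=H_{\ell+1}$, which follows from~\eqref{eq:eeq1}-\eqref{eq:eeq2} by ensuring that the relevant scalar factors are nonzero in the appropriate ranges. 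The second diamond of $L$ then corresponds to the first two-dimensional component after $H_1$ in $H$, namely $H_q$.

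The substantive step is determining the diamond types. First one pins down distinguished generators $X,Y$ of $H_1$ by imposing $[L_2,Y]=0$ together with the second-diamond relations~\eqref{eq:second_diamond_type} at $\ell=q$, which fixes $X$ and $Y$ up to scalars. For a diamond in degree $\ell=t(q-1)+1$ with $V$ spanning $H_{\ell-1}$, one evaluates the iterated brackets $\{V,X,X\}$, $\{V,X,Y\}$, $\{V,Y,X\}$ and $\{V,Y,Y\}$ by repeated application of~\eqref{eq:eeq1}-\eqref{eq:eeq2}. The diamond has finite type precisely when $\{V,X,X\}=0=\{V,Y,Y\}$; by Lucas's theorem this vanishing of the binomial coefficients in~\eqref{eq:eeq1} happens exactly when $t\equiv 1\pmod{p^s}$. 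When the condition does hold, the type is read off from~\eqref{eq:diamond_type} as an affine function of $t/p^s$ whose slope involves $\pi$ (originating in the exponent $-j\pi+a$ of the closed form for $e_{j,k,a}$), yielding the claimed arithmetic progression. Specializing to $t=p^s+1$ recovers $\nu=-1+1/\pi$, and since $\pi\notin\F_p$ is forced by $(\pi^p-\pi)\sigma^p=1$, this progression is not entirely contained in the prime field.

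The main obstacle lies in the last step: the combinatorial bookkeeping of which binomial coefficients in~\eqref{eq:eeq1}-\eqref{eq:eeq2} vanish modulo $p$ as $t$ varies, together with the extraction of the correct affine formula for the finite diamond type in terms of $\pi$. All other ingredients—Theorem~\ref{thm:special}, the multiplication rules, the graded-dimension count, and the covering property—are essentially routine once the closed form for $e_{j,k,a}$ is in hand.
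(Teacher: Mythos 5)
Your overall architecture coincides with the paper's: Theorem~\ref{thm:special} applied to $\sigma^{-1}D$ with $\lambda=\sigma^{-1}$ yields the graded basis, the closed form for $e_{j,k,a}$ is checked by matching coefficients on the $p$-dimensional $D$-invariant subspace, and the covering property and diamond types are verified inside $H$ via the products \eqref{eq:eeq1}--\eqref{eq:eeq2}, with the degree-one component spanned by $X=e_{-1,0,0}$ and $Y=e_{q-2,-1,0}$. Up to that point your plan is sound and essentially identical to the paper's proof.

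However, your stated criterion for distinguishing finite from infinite diamond types is wrong. You assert that ``the diamond has finite type precisely when $\{V,X,X\}=0=\{V,Y,Y\}$'' and that Lucas's theorem governs when these brackets vanish. In fact every diamond of this algebra, of finite or infinite type, satisfies $\{V,X,X\}=0=\{V,Y,Y\}$: these two relations are part of the definition of a diamond of \emph{any} type in \eqref{eq:diamond_type}, and the paper records them both for $v=e_{0,-1,a}$ (finite type) and for $w=e_{0,k,a}$ with $k\ge 0$ (infinite type). The dichotomy is instead decided by the ratio of $\{V,Y,X\}$ to $\{V,X,Y\}$. The element spanning $H_{t(q-1)}$ is $e_{0,k,a}$; when $k\ge 0$ its bracket with $Y=e_{q-2,-1,0}$ is computed by the generic rule \eqref{eq:eeq1} and one finds $\{w,Y,X\}=-\{w,X,Y\}$, i.e.\ infinite type, whereas when $k=-1$ (which is exactly the case $t\equiv 1\pmod{p^s}$) that bracket falls into the exceptional rule \eqref{eq:eeq2}, since both second indices equal $-1$, and its $\pi$-dependent coefficients give $\{v,X,Y\}=-\sigma(\pi+a)e_{q-3,p^s-2,a-1}$ and $\{v,Y,X\}=\sigma(2\pi+a)e_{q-3,p^s-2,a-1}$, whence the finite type $\mu=-1-a/\pi=-a\nu-a-1$. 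So the ``combinatorial bookkeeping of which binomial coefficients vanish'' that you identify as the main obstacle is not where the finite types come from, and an attempt to show $\{V,X,X\}\neq 0$ or $\{V,Y,Y\}\neq 0$ for $t\not\equiv 1\pmod{p^s}$ would fail. The remaining ingredients of your outline (the graded dimension count, the covering property, the value $\nu=-1+1/\pi$ at $t=p^s+1$, and the observation that $(\pi^p-\pi)\sigma^p=1$ forces $\pi\notin\F_p$ and hence a progression not contained in the prime field) are correct and match the paper.
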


Note that the type $\nu$ of the diamond in degree
$(q-1)(p^s+1)+1$, with $\nu\in \F\setminus\F_{p}$, determines $\sigma$ and $\pi$ uniquely
via
\[
\pi^p-\pi=1/\sigma^p, \quad \nu=-1+1/\pi.
\]

\begin{proof}
We have already deduced from Theorem~\ref{thm:special}
that the elements $e_{j,k,a}$ form a graded basis of $H$
with respect to a cyclic grading, with degrees as specified.
In particular, the elements
\[
X= e_{-1, 0, 0}=(1+ \sigma x^{(p^{s})})^{\pi} x
\qquad\textrm{and}\qquad
Y= e_{q-2, -1, 0}=(1+ \sigma x^{(p^{s})})^{2 \pi} \bar{y}
\]
span the homogeneous component of degree $1$ in this grading.
Form the tensor product of $H$ with a polynomial ring $\F[T]$.
According to Definition~\ref{def:loop_algebra} the loop algebra $L$ is the subalgebra of $H\otimes \F[T]$ generated by the elements
$X\otimes T$ and $Y\otimes T$.
We need to prove that the covering property holds in $L$, and that the diamonds have the types specified.
However, it is possible and notationally more convenient to prove those conclusions working inside
the Hamiltonian algebra $H$
rather than in its loop algebra $L$.

We have already given the full multiplication table of the elements $e_{j,k,a}$ in Equations~\eqref{eq:eeq1} and~\eqref{eq:eeq2}.
In particular, for $j \neq -1,0$ we read off that
$\{e_{j,k,a},Y\}=0$ and $\{e_{j,k,a},X\}=e_{j-1,k,a}$.
Thus, all one-dimensional homogeneous components which do not immediately precede a two-dimensional component are centralized
by $Y$, and satisfy the covering property.
By this expression we mean that each of them covers the next component,
in the sense that for each nonzero element $u$ in any of them, $\{u,L_1\}$ equals the following component.
We deal with the remaining components next.

Each element $v=e_{0,-1, a}=(1+\sigma x^{(p^{s})})^{a} y$,
which has degree $(1-q)(ap^{s}-1)$,
spans the homogeneous component just preceding a diamond of finite type.
In fact, we have
 \begin{align*}
 &\{v,X\}=e_{-1,-1, a}\\
 &\{v,Y\}=\sigma(2 \pi + a)e_{q-2,p^s-2,a-1}\\
 &\{v,X,X\}=0=\{v,Y,Y \}\\
 &\{v,X,Y\}=-\sigma(\pi + a )e_{q-3,p^s-2,a-1}\\
 &\{v,Y, X\}=\sigma (2 \pi + a)e_{q-3,p^s-2, a-1}
 \end{align*}
Besides proving that the covering property holds in the relevant components, these equations show that
$\mu\{v,Y,X\}=(1- \mu)\{v,X,Y\}$ where $\mu= -a \nu -a-1$.
According to Equation~\eqref{eq:diamond_type}
we are in the presence of a diamond of type $\mu$.
In particular, the element $e_{0,-1,0}$, of degree $q-1$, immediately precedes
the second diamond, of type $-1$ as always,
and the element $e_{0,-1,-1}$, of degree $(q-1)(p^s+1)$, immediately precedes a diamond of type $\nu$.
These two diamond types determine an arithmetic progression, which describes all the diamond types considered here.

Finally, we show that each element $w=e_{0,k,a}=(1+\sigma x^{(p^{s})})^{a} x^{(k+1)}y$, for $0\le k<p^{s}-1$,
which has degree $(1-q)(ap^{s}+k)$,
spans the homogeneous components which immediately precedes a diamond of infinite type.
In fact, we have
 \begin{align*}
 &\{w,X\}=e_{-1,k,a}\\
 &\{w,Y\}=-e_{q-2,k-1,a}\\
 &\{w,X,X\}=0=\{w,Y,Y\}\\
 &\{w,X,Y\}=-e_{q-3,k-1,a}\\
 &\{w,Y,X\}=e_{q-3,k-1,a}=-\{w,X,Y\}
 \end{align*}
We have proved that the loop algebra $L$ is thin, and that the diamonds types are as specified.
\end{proof}

\begin{rem}\label{rem:char2-big}
Theorem~\ref{thm:big_field} admits an interpretation in the forbidden case of characteristic two.
As mentioned in Section~\ref{sec:Cartan},
the algebra $H=H(2;(s+1,n);\Phi(1))$ is not simple in characteristic two, but its derived subalgebra $H^{(1)}$ is.
It is spanned by the same monomials except $\bar x\bar y$, and hence has dimension $2^{s+n+1}-1$.
All our basis elements $e_{j,k,a}$ belong to $H^{(1)}$ except for
$e_{q-2,2^s-2,1}=x^{(p^s-1)}\bar{y}+\sigma \bar{x}\bar{y}$.
This is one of the two basis elements of degree $q$ modulo $(q-1)2^{s+1}$,
which usually span the second diamond in the loop algebra.
The elements $X=e_{-1,0,0}$ and $Y=e_{q-2,-1,0}$ generate $H^{(1)}$.
The corresponding loop algebra is still thin,
but the second diamond becomes fake of type $1\equiv -1 \bmod 2$.
In fact, the homogeneous component of degree $q-1$ is generated by $v=e_{0,-1,0}$, and we have
$\{v,Y\}=2 \pi \sigma e_{q-2,2^s-2,1}= 0$.
\end{rem}

\begin{rem}\label{rem:s=0}
When $s=0$ the loop algebra of $H(2;(1,n);\Phi(1))$ according to the grading given
in Theorem~\ref{thm:big_field} is a Nottingham algebra with all diamonds of finite type.
This algebra was constructed in~\cite{AviMat:A-Z} as a loop algebra of $H(2;(1,n);\Phi(1))$
with respect to a certain grading, which, however, differs from the grading given here.
A very similar discrepancy occurred in~\cite{AviMat:-1}, and the detailed explanation
for it given in~\cite[Remark~4.3]{AviMat:-1} applies here, almost verbatim.
\end{rem}

\section{The prime field case}\label{sec:prime-field}

In this final section we construct Nottingham Lie algebras with $p^{s}-1$ diamonds of infinite type
separated by single occurrences of a diamond of finite type, with the finite types forming a nonconstant arithmetic progression in the prime field.
Assume $p$ odd and let $H=H(2;(s+1,n))^{(2)}$, for some $s>0$, which has dimension $p^{s+n+1}-2$.
Set $q=p^{n}$ and $N=p^{s+1}(q-1)$.
The derivation
$D=(\ad y)^{p^s}$ of $H$ satisfies $D^p=0$, because
\[
D(x^{(ap^s)} x^{(k+1)}y^{(j+1)})=x^{((a-1)p^s)}x^{(k+1)}y^{(j+1)}.
\]
Thus, in this case the exponential of $D$ makes sense on the whole algebra $H$.
Let $\pi \in \F_{p}$ with $\pi\neq 0$.
We obtain a grading of $H$ over
$\Z/N\Z$ by assigning the monomial $x^{(ap^s)}x^{(k+1)}y^{(j+1)}$ degree
$(1-q)\bigl((a+j \pi)p^s+k\bigr)-j+N\Z$.
Because the derivation $D$ is graded of degree $N/p$ we can apply Theorem~\ref{thm:exp}
and obtain another grading of $H$ over the integers modulo $N$.
A new graded basis, with degrees given by the same formula above, consists of the elements
\[
e_{j,k,a}=a! E(D)(x^{(ap^s)}x^{(k+1)}y^{(j+1)})=(1+x^{(p^s)})^{a}\, x^{(k+1)}y^{(j+1)},
\]
where $0\le a<p$, $-1 \leq k <p^s-1$ and $-1 \leq j < q-1$, with
$(j,k,a)\not=(-1,-1,0),(q-2,p^{s}-2,p-1)$.
For later convenience we  set
$e_{-1,-1,0}=0=e_{q-2,p^s-2,p-1}$.
As in the previous section the products of the elements $e_{j,k,a}$ are easily obtained, and are found to be
\begin{equation}\label{eq1}
\{e_{j,k,a},e_{l,h,b}\}= \left( \tbinom{k+h+1}{h} \tbinom{j+l+1}{j}-
 \tbinom{k+h+1}{k} \tbinom{j+l+1}{l}\right)e_{j+l,k+h, a+ b}
\end{equation}
for $k+h>-2$, and
\begin{equation}\label{eq2}
\{e_{j,-1,a},e_{l,-1,b}\}=\left(b \tbinom{j+l+1}{j}- a \tbinom{j+l+1}{l}\right)e_{j+l,p^s-2,a+ b-1}.
\end{equation}

\begin{theorem}\label{thm:prime_field}
Let $\F$ be a field of odd characteristic $p$, let $q=p^n\ge p$, and let $\pi$ be a nonzero element of $\F_p$.
Then the elements
\[
e_{j,k,a}=(1+x^{(p^s)})^a\, x^{(k+1)}y^{(j+1)},
\]
for $0\le a<p$, $-1 \leq k <p^s-1$ and $-1 \leq j < q-1$,
form a graded basis of $H(2;(s+1,n))^{(2)}$ over the integers modulo $(q-1)p^{s+1}$,
where $e_{j,k, a}$ has degree
$(1-q)\bigl((a+j \pi)p^s+k\bigr)-j\pmod{(q-1)p^{s+1}}$.

The corresponding loop algebra $L$ is thin, with second diamond in degree $q$.
The diamonds occur in all degrees of the form $t(q-1)+1$,
with the diamond type being finite exactly when $t \equiv 1 \mod p^{s}$.
Those finite types follow an arithmetic progression contained in the prime field.
The diamond in degree $(q-1)(p^s+1)+1$ has type $\nu=-1+1/\pi$.
\end{theorem}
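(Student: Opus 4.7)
The plan is to imitate the proof of Theorem~\ref{thm:big_field}, replacing the Laguerre-polynomial map by the truncated exponential $E(D)$ and invoking Theorem~\ref{thm:exp} in place of Theorem~\ref{thm:special}. The hypothesis of Theorem~\ref{thm:exp} is already verified in the paragraphs preceding the statement: $D=(\ad y)^{p^s}$ satisfies $D^p=0$ on $H=H(2;(s+1,n))^{(2)}$ and is graded of degree $N/p$ with respect to the cyclic specialisation $x^{(ap^s)}x^{(k+1)}y^{(j+1)}\mapsto(1-q)\bigl((a+j\pi)p^s+k\bigr)-j$ of the natural bigrading. Theorem~\ref{thm:exp} therefore produces the new grading of $H$ with the $e_{j,k,a}$ as a graded basis of the prescribed degrees.

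Next I would locate generators of the loop algebra. Solving $(1-q)\bigl((a+j\pi)p^s+k\bigr)-j\equiv 1\pmod N$ in the allowed ranges of $(j,k,a)$ shows that the homogeneous component of degree $1$ is two-dimensional, spanned by $X=e_{-1,0,\pi}=(1+x^{(p^s)})^{\pi}x$ and $Y=e_{q-2,-1,2\pi}=(1+x^{(p^s)})^{2\pi}\bar y$, where $\pi$ and $2\pi$ are interpreted as integer representatives modulo $p$. The hypothesis $\pi\neq 0$ guarantees that neither element coincides with one of the conventionally vanishing basis elements $e_{-1,-1,0}$ or $e_{q-2,p^s-2,p-1}$. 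An induction on degree using \eqref{eq1} (or the simplicity of $H^{(2)}$ in odd characteristic) shows that $X$ and $Y$ generate $H^{(2)}$, so the loop algebra $L\subseteq H^{(2)}\otimes\F[T]$ is defined in accordance with Definition~\ref{def:loop_algebra}, and every further verification can be carried out inside $H^{(2)}$ via the multiplication rules \eqref{eq1} and \eqref{eq2}.

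The covering property and the diamond types are then established by the same three-case analysis as in the proof of Theorem~\ref{thm:big_field}. For $e_{j,k,a}$ with $j\neq -1,0$, \eqref{eq1} gives $\{e_{j,k,a},Y\}=0$ and $\{e_{j,k,a},X\}=e_{j-1,k,a+\pi}$, so such components are centralised by $Y$ and covered by $X$. For $w=e_{0,k,a}$ with $0\le k<p^s-1$, the double brackets computed from \eqref{eq1} give $\{w,X,X\}=\{w,Y,Y\}=0$ and $\{w,Y,X\}=-\{w,X,Y\}\neq 0$, the relation characterising a diamond of type $\infty$. For $v=e_{0,-1,a}$, combining \eqref{eq1} and \eqref{eq2} yields $\{v,X\}=e_{-1,-1,a+\pi}$, $\{v,Y\}=(a+2\pi)e_{q-2,p^s-2,a+2\pi-1}$, $\{v,X,Y\}=-(a+\pi)e_{q-3,p^s-2,a+3\pi-1}$, and $\{v,Y,X\}=(a+2\pi)e_{q-3,p^s-2,a+3\pi-1}$, from which the diamond relation $\mu\{v,Y,X\}=(1-\mu)\{v,X,Y\}$ is satisfied with $\mu=-a/\pi-1$. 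Specialising to $a=p-1\equiv -1\pmod p$ gives the claimed type $\nu=-1+1/\pi$ in degree $(q-1)(p^s+1)+1$, and the linear dependence on $a$ produces the arithmetic progression of finite types.

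The main obstacle is the correct treatment of the two conventionally vanishing basis elements. They occur in the brackets above exactly at the values $a\equiv-\pi$ and $a\equiv-2\pi\pmod p$, which are precisely the values at which $\mu=-a/\pi-1$ passes through $0$ and $1$. At those positions one of $\{v,X\}$ or $\{v,Y\}$ literally vanishes in $H^{(2)}$, degenerating the would-be diamond into a fake diamond of type $0$ or $1$ in the sense of Section~\ref{sec:nott}; covering at those degrees must then be re-examined by checking that the remaining non-vanishing bracket still spans the next one-dimensional component. This degeneration is exactly what singles out $H(2;(s+1,n))^{(2)}$, of dimension two less than a power of $p$, as the correct finite-dimensional Lie algebra for this construction, in contrast to the big-field case of Theorem~\ref{thm:big_field}.
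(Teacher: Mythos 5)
Your proposal is correct and follows essentially the same route as the paper's proof: apply Theorem~\ref{thm:exp} to get the graded basis, take $X=e_{-1,0,\pi}$ and $Y=e_{q-2,-1,2\pi}$, and verify the covering property and diamond types inside $H$ by the same three-case analysis, with the same bracket formulas and the same finite type $\mu=-a/\pi-1$. Your explicit flagging of the two conventionally vanishing basis elements, which degenerate the diamonds at $a\equiv-\pi$ and $a\equiv-2\pi$ into fake diamonds of types $0$ and $1$, addresses a point the paper's proof leaves implicit; the displayed brackets already show that the surviving product still covers the next component at those degrees.
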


\begin{proof}
The preceding discussion shows that the elements $e_{j,k,a}$ form a graded basis of $H$
with respect to a cyclic grading, with degrees as specified.
In particular, the elements
\[
X=e_{-1,0,\pi}=(1+x^{(p^{s})})^{\pi}x
\qquad\textrm{and}\qquad
Y=e_{q-2,-1,2\pi}=(1+x^{(p^{s})})^{2\pi}\bar y
\]
span the homogeneous component of degree $1$ in this grading.
As in the proof of Theorem~\ref{thm:big_field}, we conveniently check the covering property and the diamond types in $H$ rather than in its loop algebra $L$.

For $j \neq -1,0$ we have
$\{e_{j,k,a},X\}=e_{j-1,k,a+\pi}$ and
$\{e_{j,k,a}, Y\}=0$.
Next, each element $v=e_{0,-1,a}$, of degree $(1-q)(ap^{s}-1)$,
spans the homogeneous component preceding a diamond of finite type $\mu= -a \nu -a-1$, because
\begin{align*}
&\{v,X\}=e_{-1,-1,a+\pi}\\
&\{v,Y\}=(2\pi+a)e_{q-2,p^s-2,a-1+2\pi}\\
&\{v,X,X\}=0=\{v,Y,Y\}\\
&\{v,X,Y\}=-(\pi+a) e_{q-3,p^s-2,a-1+3\pi}\\
&\{v,Y,X\}=(2\pi+a)e_{q-3,p^s-2,a-1+3\pi}
\end{align*}
As in the proof of Theorem~\ref{thm:big_field} these diamond types follow the stated arithmetic progression.
Finally, each element $w=e_{0,k,a}$, for $0\le k<p^s-1$, of degree $(1-q)(ap^s+k)$,
occurs just before a diamond of infinite type, because
\begin{align*}
&\{w,X\}=e_{-1,k, a+\pi}\\
&\{w,Y\}=e_{q-2,k-1,a+2\pi}\\
&\{w,X,X\}=0=\{w,Y,Y\}\\
&\{w,X,Y\}=-e_{q-3,k-1,a+3\pi}\\
&\{w,Y,X\}=e_{q-3,k-1,a+3\pi}=-\{w,X,Y\}
\end{align*}
The proof is complete.
\end{proof}

The case $\pi=0$, which is excluded above, should correspond to taking $\nu =\infty$.
However, the grading in Theorem~\ref{thm:prime_field} does not produce a thin Lie algebra because $\{e_{0,-1,0},X,Y\}=0=\{e_{0,-1,0},Y,X\}$.

\begin{rem}\label{rem:char2-prime}
Differently from Theorem~\ref{thm:big_field}, which needs to be modified in characteristic two
as described in Remark~\ref{rem:char2-big}, Theorem~\ref{thm:prime_field}
remains true as stated when $p=2$, as long as $q>2$.
However, except for a brief mention in Section~\ref{sec:nott}
we have not introduced thin Lie algebras of characteristic two in this paper.
The discussion in~\cite[Section~3]{AviMat:A-Z} explains how the relevant terminology
is to be interpreted, including the peculiar fact that the second diamond is fake.
In fact, in the case under discussion $\pi=1$, and the diamonds of finite type of the loop algebra $L$ are all fake, of alternate types $0$ and $1$.
\end{rem}

\bibliography{References}
\end{document}